\newtheorem{thm}{Theorem}
\newtheorem{defn}{Definition}
\newtheorem{lem}{Lemma}
\begin{document}

\doi{}
\Issue{0}{0}{0}{0}{0} 
\HeadingAuthor{Clancy et al.} 
\HeadingTitle{An effective crossing minimisation heuristic...} 

\title{An effective crossing minimisation heuristic based on star insertion}

\author[first]{Kieran Clancy}{kieran.clancy@flinders.edu.au}
\author[first,second]{Michael Haythorpe}{michael.haythorpe@flinders.edu.au}
\author[first]{Alex Newcombe}{alex.newcombe@flinders.edu.au}

\affiliation[first]{College of Science and Engineering,\\
Flinders University,\\
1284 South Road, Tonsley, Australia}

\affiliation[second]{Corresponding author}

\submitted{May 2018}%
\reviewed{November 2018}%
\revised{January 2019}%
\accepted{February 2019}%
\final{February 2019}%
\published{???}%
\type{Regular paper}%
\editor{G. Liotta}%

\maketitle

\begin{abstract}
We present a new heuristic method for minimising crossings in a graph. The method is based upon repeatedly solving the so-called {\em star insertion problem} in the setting where the combinatorial embedding is fixed, and has several desirable characteristics for practical use.  We introduce the method, discuss some aspects of algorithm design for our implementation, and provide some experimental results.  The results indicate that our method compares well to existing methods, and also that it is suitable for dense instances.
\end{abstract}

\Body

%
%
%
%
%


\section{Introduction}
\label{sec:intro}
The crossing number of a graph $G$, denoted by $cr(G)$, is defined as the minimum number of pairwise edge intersections of any drawing of $G$ in the plane.  Crossing numbers have been the source of many challenging problems since their introduction by Tur\'an in the 1950s \cite{tur}.  Determining the crossing number of a graph is NP-hard \cite{GareyJohnson} and remains NP-hard even for restricted versions of the problem, such as finding $cr(G)$ for a planar graph with the addition of a single edge \cite{cab}.  In recent years, an integer linear programming approach for finding $cr(G)$ has been developed in \cite{buch2}, \cite{chim} and \cite{chim:gut:mutz}.  This is successful for sparse graphs of moderate size, however ILPs and other exact methods are very limited when it comes to dense graphs.  For example, even $cr(K_{13})$ remains unknown \cite{pan}.  Therefore, heuristic methods are currently of interest and are the subject of our present work.  Interested readers are referred to \cite{buch} for a description of the current methods for computing crossing numbers, or to \cite{vrt} for a comprehensive bibliography of results up until 2014 related to crossing numbers.

This manuscript is organised as follows. For the remainder of this section, we give some necessary definitions. In Section \ref{sec:Related} we discuss some related approaches to crossing minimisation, most notably including the {\em planarisation method} and its variants, which are regarded as the current best practical methods for minimising crossings \cite{chim4}.  We propose our new heuristic method in Section \ref{sec:newmethod} and discuss some implementation and design aspects in Section \ref{sec:designmeth}.  In Section \ref{sec:runtime} the runtime is shown to be $O((k+n)m)$ time per iteration, where $k$ is the number of crossings in a current drawing of $G$.  Lastly, in Section \ref{sec:experiments}, we benchmark the new heuristic on several collections of graphs which have been used in the past for testing other crossing minimisation algorithms.  A fully featured implementation of our heuristic, which we call QuickCross, is available at http://fhcp.edu.au/quickcross in both C and \textsc{MATLAB} format.

\subsection{Definitions}
For an undirected graph $G$ with vertex set $V(G)$ and edge set $E(G)$, let $n=|V(G)|$ and $m=|E(G)|$.  Let $d(v)$ be the degree of vertex $v$ and $\Delta(G)$ denote the maximum degree of any vertex in $G$.  For a vertex $v \in V(G)$, let $N_G(v)$ denote the neighbourhood of $v$; that is, the set of vertices such that there exist edges in $G$ connecting $v$ to those vertices.

An {\em embedding} of a graph $G$ onto a surface $\Sigma$ (a compact, connected 2-manifold) is a representation of $G$ onto $\Sigma$ such that vertices are distinct points on $\Sigma$ and each edge $e$ is a simple arc on $\Sigma$ connecting the points associated with the end vertices of $e$.  The embedding must also satisfy: An arc of edge $e$ does not include any points associated with vertices other than the end vertices of $e$, and two arcs never intersect at a point which is interior to either of the arcs.  Two embeddings are equivalent if there is a homeomorphism of $\Sigma$ which transforms one into the other.  The equivalence class of all such embeddings is a {\em topological embedding} of $G$.  In this present work, we are only concerned with embeddings in which $\Sigma$ is the surface of a sphere and this is assumed to be the case for the rest of this paper.  A topological embedding of $G$ onto the sphere uniquely defines a cyclic ordering of the edges incident to each vertex of $G$ and the collection of these cyclic orderings is a {\em combinatorial embedding} for $G$.  A combinatorial embedding $\Pi$ defines a set of cycles in $G$ which bound the faces of any embedding belonging to the associated topological embedding, and so we may talk about the set of `faces' of $\Pi$.  Similarly, $\Pi$ defines a dual graph $\Pi^*$ which is isomorphic to the dual graph of any embedding belonging to the associated topological embedding.  Note that the edges $e_1,e_2,\dots,e_m$ of $G$ are in one-to-one correspondence with edges $e^*_1,e^*_2,\dots,e^*_m$ of the dual graph $\Pi^*$.

A {\em drawing} $D$ is a representation of a graph $G$ onto the plane with similar conditions to an embedding.  Vertices are represented as distinct points and each edge $e$ is represented by a simple arc between the points associated with the end vertices of $e$.  The drawing must also satisfy: An arc of edge $e$ does not include any points associated with vertices other than the end vertices of $e$, and any intersection between the interiors of arcs involves at most two arcs.  Given a drawing $D$ of $G$, the intersections which occur in the interiors of arcs are the {\em crossings} of the drawing and the number of crossings is denoted by $cr_D(G)$.  The {\em crossing number} of a graph is denoted by $cr(G)$ and is the minimum number of crossings over all possible drawings of $G$.  If $cr_D(G)=0$, then $G$ is planar, and we say that $D$ is a {\em planar drawing} of $G$.  A {\em planarisation} of a drawing is a planar drawing of the planar graph obtained by replacing crossings of the initial drawing with dummy vertices of degree 4.  Hence the graph corresponding to the planarisation of $D$ has $n+cr_D(G)$ vertices and $m + 2cr_D(G)$ edges.

In what follows, when no confusion is possible, we shall refer to the arcs of a drawing or embedding as `edges' of the drawing (or embedding) and the points associated with vertices as `vertices' of the drawing (or embedding).

Throughout this paper, we will often consider the situation where we have a combinatorial embedding $\Pi$ of a graph, and then need to add an edge $e$ to the graph and obtain an updated combinatorial embedding. In such cases, we will say that we are {\em inserting} an edge $e$ into $\Pi$, as follows. Suppose that $e = (v_1,v_2)$, where $v_1,v_2 \in V(G)$, and let $\Gamma$ be an embedding which realises the cyclic orderings in $\Pi$.  A simple arc connecting $v_1$ and $v_2$ may be added to $\Gamma$, such that the interior of the arc intersects only with the interiors of some (possibly empty) ordered set of edges $\{e_1,e_2,\dots,e_k\}$ already present in $\Gamma$.  Clearly, for any embedding which realises the cyclic orderings in $\Pi$, such an arc can be found which intersects exactly the same set of edges $\{e_1,e_2,\dots,e_k\}$.  We also refer to these intersections as `crossings'.
	
In the following discussion, we will be defining the \textit{star insertion problem} (SIP).  Prior to that, we first consider the \textit{edge insertion problem} (EIP) studied in \cite{gut2}, which has two variations depending on the definition of optimality used; the fixed embedding variation and the variable embedding variation:
\begin{defn}{(EIP -- fixed embedding)}
Given a combinatorial embedding $\Pi$ of a graph $G$ and a pair of vertices $v_1,v_2 \in V(G)$, insert the edge $e=(v_1,v_2)$ into $\Pi$ in such a way that the number of crossings is minimised.
\end{defn}
\begin{defn}{(EIP -- variable embedding)}
Given a planar graph $G$ and a pair of vertices $v_1,v_2 \in V(G)$, find a combinatorial embedding $\Pi$ of $G$ such that inserting the edge $e=(v_1,v_2)$ into $\Pi$ so as to minimise the crossings results in the minimal number of crossings among all embeddings of $G$.
\end{defn}
The fixed embedding problem can be solved in $O(n)$ time by finding a shortest path in a modified dual graph, and this is explained in detail in \cite{gut}.  In \cite{gut2} it is shown that the variable embedding problem can also be solved in $O(n)$ time by taking advantage of the properties of maximal tri-connected components and SPQR trees.  Note that solving the edge insertion problem is different from computing the crossing number of $G + e$ (the graph $G$ with the addition of edge $e$).  However, it is shown in \cite{hlin} that the number of crossings introduced in a solution to the variable embedding version approximates $cr(G + e)$ to within some factor and the best possible factor is proved in \cite{cab2} to be $\lfloor \Delta (G)/2 \rfloor$.

A natural extension to the above is the {\em star insertion problem} (SIP) where instead of a single edge, the object to be added to $G$ is a vertex $v$ along with a set of incident edges of $v$ (collectively, a star).  As before, there are fixed embedding and variable embedding versions of SIP:

\begin{defn}{(SIP -- fixed embedding)}
Given a combinatorial embedding $\Pi$ of a graph $G$ and a vertex $v \not\in V(G)$ (along with a set of incident edges whose other endpoints are all in $V(G)$), insert $v$ along with its incident edges into $\Pi$ in such a way that the number of crossings is minimised.
\end{defn}
\begin{defn}{(SIP -- variable embedding)}
Given a planar graph $G$ and a vertex $v \not\in V(G)$ (along with a set of incident edges whose other endpoints are all in $V(G)$), find a combinatorial embedding $\Pi$ of $G$ such that inserting $v$ along with its incident edges into $\Pi$ so as to minimise the crossings results in the minimal number of crossings among all embeddings of $G$.
\end{defn}

The fixed embedding version can be solved in $O(d(v) \cdot n)$ time using a method similar to the single edge insertion version \cite{chim3}. We will make use of this approach during our heuristic, and we briefly outline our implementation of this method in Section \ref{sec:newmethod}. The complexity of the variable embedding version was in question for a short time but was resolved by Chimani et al \cite{chim3} who showed it to be solvable in polynomial time by a method which is briefly outlined in Section \ref{sec:Related}.  Again, the number of crossings introduced in a solution to the variable embedding version is shown in Chimani, Hlin\v en\'y and Mutzel \cite{chim2} to approximate the crossing number of the graph $G+v$ to within a factor of $d(v) \lfloor \Delta(G)/2 \rfloor$.

\section{Related work}\label{sec:Related}
Crossing minimisation has been considered in a number of contexts. For example, in the field of automated graph drawing, heuristics have been developed to construct drawings of graphs or networks with desirable characteristics, which often includes a low number of crossings. Approaches including force-directed drawing algorithms \cite{Eades,FM3,Kawai} and genetic algorithms \cite{Branke,Eloranta,barreto} have been developed for this purpose. When crossing minimisation is the sole aim, arguably the most successful heuristics to date have been based on edge insertion procedures.

\subsection{Planarisation method}

The planarisation method, a highly effective crossing minimisation heuristic, is based upon repeatedly solving the edge insertion problem. In particular, the planarisation method involves attempting to solve two separate problems:

\begin{enumerate}
\item Compute a planar subgraph $G_p$ of $G$ - ideally a maximum planar subgraph.
\item Iteratively re-insert the remaining edges of $G$ into a combinatorial embedding of $G_p$ while striving to keep number of crossings as small as possible.
\end{enumerate}

Computing a maximum planar subgraph is NP-hard \cite{liu}, so instead a locally maximal planar subgraph is usually used for step 1, which can be computed in $O(n+m)$ time \cite{Djidjev}. To achieve step 2, given a planar subgraph of $G$, EIP (in the fixed or variable embedding) is solved for one of the missing edges. Then any introduced crossings are replaced by degree 4 dummy vertices to obtain a new planar graph, and EIP is solved again for another missing edge, and so on until an embedding of the full graph is obtained.

The planarisation method was first described in the context of EIP-fixed by Batini et al \cite{batini}.  Later, in Gutwenger \cite{gut}, the method was rigorously developed for EIP-variable, along with an implementation and experimental results which were also reported in Gutwenger and Mutzel \cite{gut:mutz}.  In most cases, the method based on EIP-variable provided superior solutions for the tested graphs. However, it was observed that the EIP-variable method often suffered in runtime in comparison to EIP-fixed implementations, due to the many SPQR trees which need computed (a new SPQR tree for every edge inserted).  Later, in Chimani and Gutwenger \cite{chim4}, implementations were also reported on which focused on improving the post processing schemes that can be utilised when running these methods and again improved results were obtained from those previously reported.

A related approach to the planarisation method is to solve the multiple edge insertion problem (MEI), which involves inserting several edges simultaneously into a planar graph.  Let $F$ be the set of edges being inserted into some planar graph $G$.  For general $F$, solving MEI to optimality is NP-Hard \cite{Zieg}, and approximation algorithms have been developed in \cite{chuzhoy} and \cite{chim5}.  An approximate solution to MEI is known to approximate the crossing number of the graph $G+F$ \cite{chim2} and so for graphs of bounded degree and bounded $|F|$, the algorithm in \cite{chuzhoy} constitutes a multiplicative factor approximation algorithm for $cr(G+F)$ and the algorithm in \cite{chim5} constitutes an additive factor approximation algorithm for $cr(G+F)$.  Among implementations based on MEI, only the algorithm of Chimani \cite{chim5} has been experimentally reported on. In particular, it was considered in Chimani and Gutwenger \cite{chim4}, which is the most recent analysis on the practical usage of various crossing minimisation heuristics. Chimani and Gutwenger \cite{chim4} claim that the MEI implementation from \cite{chim5} achieves roughly comparable solution quality to the best iterative EIP-variable method, with the benefit of significantly reduced runtimes. If runtimes are disregarded, the iterative EIP-variable method with the addition of a significant post processing step usually produced the best solutions, however overall (in terms of both solution quality and runtime) Chimani and Gutwenger \cite{chim4} advocate that the MEI implementation from \cite{chim5} was the best heuristic for practical use.

Recently, in \cite{Radermacher}, another variation of EIP was investigated.  Given a combinatorial embedding $\Pi$ of $G$ and $v_1,v_2 \in V(G)$, the task is to find a straight line drawing of $G$ which realises the cyclic orderings of $\Pi$, and such that a straight line between $(v_1,v_2)$ can be added with minimal number of crossings among all straight line drawings of $G$ which realise the cyclic orderings of $\Pi$.  This problem is known as geometric edge insertion and for the case $\Delta(G) \leq 5$, can be solved in linear time by the algorithm in \cite{Radermacher}.

\subsection{Methods based on star-insertion}
\label{sec:planmethod}
The approach to solving SIP-variable, described in Chimani et al \cite{chim3}, can be summarised as follows for a given graph $G$ and vertex $v$ to be inserted.

\begin{enumerate}
\item Compute an SPQR tree $T$ of $G$, and consider a face $f$ in one of the skeleton graphs of $T$ ($f$ belongs to a set of `interesting' faces).
\item Solve a dynamic program whose solution advises the best combinatorial embedding which admits the minimal number of crossings when inserting $v$ into $f$.
\item Repeat the above for all `interesting' faces and select the solution which results in the fewest crossings.
\end{enumerate}

Although the runtime of the algorithm provided in \cite{chim3} is polynomial, it is considerably higher than solving EIP-variable, and experimental results have yet to be reported on. Nonetheless, a heuristic analogous to the planarisation method, but using star insertion rather than edge insertion, could be proposed. Indeed, in Chimani and Gutwenger \cite{chim4}, it is asked whether a heuristic based on star insertion could compare to the proven practical performance of the heuristic methods based on edge insertion. This present work seeks to answer this question, at least for SIP-fixed, but the approach we advocate is different in character to the planarisation method.

In particular, the approach that we advocate is to iteratively obtain improved drawings of a graph in the following way. For a given drawing $D$ of a graph $G$, we attempt to find a vertex $v$ in $G$ satisfying the following: if we remove $v$, and then reintroduce $v$ by solving the star insertion problem in a corresponding (fixed) combinatorial embedding, a drawing $D_2$ can be obtained such that $cr_{D_2}(G) < cr_D(G)$. If there are no vertices in the graph for which this is possible, we say that the drawing $D$ is {\em locally crossing-optimal}. In what follows, we will prove the following.

\begin{thm}Let $G$ be a graph containing $n$ vertices and $m$ edges, and let $D$ be a drawing of $G$ which contains $k$ crossings. There exists an algorithm that finds a locally crossing-optimal drawing $D^*$ of $G$ in $O((k+n)km)$ time.\end{thm}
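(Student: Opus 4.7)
The plan is to describe a simple iterative improvement scheme and then analyse its runtime. Starting from the given drawing $D$, maintain the planarisation $P$ of the current drawing together with its combinatorial embedding $\Pi$, which has at most $n + k$ vertices and $m + 2k$ edges. The outer loop sweeps over each $v \in V(G)$: remove $v$ and all dummies lying on edges incident to $v$ from $\Pi$, obtaining an embedding $\Pi_v$ of the planarisation of $D$ with $v$'s star deleted, then solve SIP-fixed to reinsert the star of $v$ into $\Pi_v$ with as few crossings as possible. If the resulting drawing $D'$ satisfies $cr_{D'}(G) < cr_D(G)$, update $D \leftarrow D'$ and restart the sweep; if an entire sweep completes with no improvement, return the current $D$, which is locally crossing-optimal by definition.

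For the inner cost, the runtime of one SIP-fixed call on $\Pi_v$ is $O(d(v)(n+k))$ by the bound stated in the introduction, applied to an embedding of at most $n+k$ vertices. Summing over all vertices in a single sweep yields $\sum_{v \in V(G)} O(d(v)(n+k)) = O(m(n+k))$ using $\sum_v d(v) = 2m$, and the extra bookkeeping required to delete $v$'s star from $\Pi$ and incorporate the SIP-fixed output back into $\Pi$ is linear in the number of edges and dummies touched, so it is absorbed into the same bound.

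For the outer loop, each improving step strictly decreases the non-negative integer quantity $cr_D(G)$, which starts at $k$, so at most $k$ improvements can occur; a final sweep with no improvement is then needed to certify local optimality, so the total number of sweeps is at most $k+1$. Multiplying gives a total runtime of $O((n+k)(k+1)m) = O((k+n)km)$, matching the claimed bound.

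The main obstacle will be the bookkeeping step: one has to argue carefully that removing $v$'s star from the planarisation merges the incident faces of $\Pi$ into a region whose boundary and dual structure are exactly those on which SIP-fixed operates, and that this update can be performed locally within the $O(d(v)(n+k))$ budget rather than by a global rebuild of $\Pi$. Once this local-update claim is made precise, correctness follows immediately from the definition of local crossing-optimality, and the runtime follows from the per-call bound together with the iteration count.
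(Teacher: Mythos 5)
Your proposal is correct and follows essentially the same route as the paper: iterate over vertices, delete the star, solve SIP-fixed in the reduced (dual of the) planarised embedding, accept strict improvements, bound each pass by $O((k+n)m)$ via $\sum_v d(v) = 2m$, and bound the number of improving iterations by $k$. The ``main obstacle'' you flag --- locally updating the dual by contracting the dual edges of the deleted star rather than rebuilding --- is precisely how the paper handles it, so no further comparison is needed.
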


It is our contention that the number of crossings in such a $D^*$ found by our algorithm is, typically, close to the crossing number of $G$. We provide experimental results justifying this assertion in Section \ref{sec:experiments}.

\section{Proposed heuristic method}
\label{sec:newmethod}
While the philosophy of the planarisation method is to start with a planar subgraph and increase the number of crossings at each iteration as the full graph is rebuilt, our approach works in the opposite direction; we start with a combinatorial embedding corresponding to a, presumably suboptimal, drawing of the full graph and at each iteration we attempt to find a combinatorial embedding corresponding to a drawing with fewer crossings.  Unlike the planarisation method, the heuristic we propose does not require a planar subgraph to be computed.  Instead it relies upon iteratively solving the star insertion problem in a combinatorial embedding which corresponds to the current (non-planar) drawing of $G$.  With the intention of keeping the new heuristic highly practical, each iteration is performed on a fixed combinatorial embedding; this is discussed further in Section \ref{sec:designmeth}.

Let $D$ be some drawing of $G$ and let $D'$ be its planarisation.  Then $D'$ can be mapped to an embedding on the sphere, and this realises a particular combinatorial embedding.  In this sense, we say that the combinatorial embedding `corresponds' to the drawing $D$.  Note that given such a combinatorial embedding, a drawing which is equivalent to $D$ can be retrieved by using any planar graph drawing techniques, such as \cite{Fray} or \cite{Schny}.

Let $D$ be a drawing of $G$ and let $\Pi$ be a combinatorial embedding corresponding to $D$.  Consider deleting from $G$ a vertex $v$ and its set of incident edges; it is clear that a subdrawing $D-v$ can be easily obtained from $D$.  Then a combinatorial embedding corresponding to the subdrawing $D-v$ can be computed by repeatedly merging faces of $\Pi$ which share an edge associated with one of the deleted edges.  We shall call this the {\em reduced combinatorial embedding} corresponding to subdrawing $D-v$ and denote it as $\Pi-v$.

We define a star insertion into a combinatorial embedding $\Pi$ by utilising definitons similar to those in \cite{gut2}.  Let $\Pi$ be a combinatorial embedding of $G$, let $f$ be a face of $\Pi$ and let $v$ be a vertex of $G$.  Then $e_1,e_2,\dots,e_j$ is an {\em insertion path} for $v$ and $f$ if either $j=0$ and $v$ is on the boundary of $f$, or the following conditions are satisfied:
\begin{enumerate}
\item $e_1,e_2,\dots,e_j \in E(G)$.
\item There is a face of $\Pi$ with both $e_j$ and $v$ on its boundary.
\item $e_1$ is on the boundary of $f$.
\item $e_1^*,e_2^*,\dots,e_j^*$ is a path in the dual graph $\Pi^*$.
\end{enumerate}

Given an insertion path, an edge can be inserted into $\Pi$ starting from an arbitrary point in face $f$ (consider this a `dummy vertex' for the moment) and ending at vertex $v$ in such a way that it crosses precisely the edges $e_1,e_2,\dots,e_j$.

Then, suppose we have a collection of insertion paths $p_1,p_2,\dots,p_\ell$ whose associated end vertices are $v_1,v_2,\dots,v_\ell$. If they can all be inserted into $\Pi$ in the above fashion, such that they are pairwise non-crossing, then we say that they collectively constitute a {\em star insertion path}.  By inserting a dummy vertex $z$ into face $f$ and attaching the beginnings of each insertion path to $z$, the star comprising of $z$ and the edges $\{(z,v_i) \mid i=1,2,\dots,\ell\}$ can be inserted into $\Pi$ in such a way that they cross precisely the edges in $p_1,p_2,\dots,p_\ell$.  For a fixed face $f$, and a fixed set of end vertices $S=\{v_1,v_2,\dots,v_\ell\}$, we say that a star insertion path which crosses the fewest edges with respect to all possible star insertion paths into $f$ with the end vertices $S$, is a {\em crossing minimal star insertion path} for $f$ and $S$.

At each iteration we begin with a combinatorial embedding $\Pi$ corresponding to a some drawing of $G$.  The processes within an iteration are summarised in the following procedure, for a given vertex $v \in V(G)$:

\subsection*{\textbf{Procedure 1:}}
\begin{itemize}
\item [P1:] Compute the reduced  combinatorial embedding $\Pi-v$.

\item [P2:] Intelligently (see below) choose a face $f$ of $\Pi-v$.  Compute the number of crossings resulting from a crossing minimal star insertion path into face $f$ for the star comprising of $v$ and its incident edges.

\item [P3:] If the total number of crossings has reduced, then insert the star comprised of $v$ and its incident edges into $f$ according to a crossing minimal star insertion path.

\item [P4:] Replace each introduced crossing with a dummy vertex of degree 4, and obtain a new combinatorial embedding.  Call this new embedding $\Pi$ and begin the next iteration.
\end{itemize}
Note that Step P2 is equivalent to solving the fixed embedding star insertion problem for the vertex $v$ (and its incident edges) in $\Pi-v$. To achieve this, we use the algorithm described in Chimani et al \cite{chim3} on page 376. Since this is an important step in our heuristic, we include its description here. We begin by utilising a simple merging procedure in the dual graph of $\Pi-v$.  For each vertex $w \in N_{G}(v)$, we perform the following steps:

\begin{enumerate}
\item Contract the cycle in the dual graph that is formed by dual vertices of those faces that are incident to $w$, into a single vertex $d_w$ (see Figure \ref{fig1} for an example.)  Remove any resulting multi-edges.
\item Find shortest paths in the dual graph with $d_w$ as the source.
\item Store the distances to each dual vertex, and for those dual vertices that were contracted in step 1, set their distance to zero.
\item Discard changes to the dual graph so that the above steps can be repeated with a different neighbor of $v$.
\end{enumerate}

After the above procedure, the dual vertex possessing the minimum sum of distances (over all $w \in N_G(v)$) corresponds to the optimal face for the new placement of $v$, and the optimal insertion paths can be determined from the shortest path trees.

\begin{figure}[H]
\begin{centering}
\includegraphics[width=0.6\linewidth]{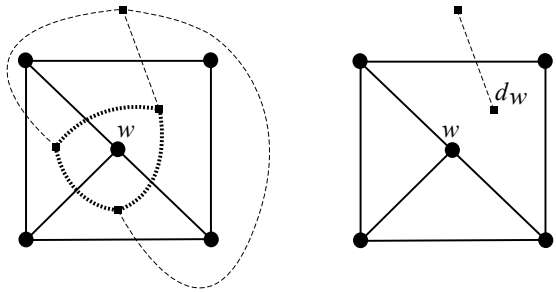}
\caption{The contraction operation in the dual graph for the vertex $w \in N_G(v)$.  Dual edges are dashed and dual vertices are squares.  The thick dashed edges which form a cycle around $w$ are contracted and multi-edges are discarded.  The resulting `merged' dual vertex is $d_w$.  In this particular case, the resulting dual graph is just a $K_2$. \label{fig1}}
\end{centering}
\end{figure}

\section{Design methodology}
\label{sec:designmeth}
In this section we outline some of the design choices and data structures of the highly practical implementation which is used for the experiments described in Section \ref{sec:experiments}.

\subsection{Initial embedding schemes}
\label{sec:init}
Since we focus on a fixed embedding at each iteration, the initial combinatorial embedding of $G$ obviously plays a large role in the performance of the heuristic.  Any drawing method can be used to compute an initial embedding, and we discuss below just three possiblities.  The first method produces an embedding quickly, however the initial number of crossings can be as large as ${n\choose4}$.  The second method is slower to compute but the initial number of crossings is usually much smaller for the case of sparse graphs.  The third method is an implementation of a force-directed graph drawing algorithm. We will refer to these three initial embedding schemes as {\em circle}, {\em planar} and {\em spring}, respectively.

The ``{\em circle}" initial embedding scheme, produces an embedding using the following procedure. We first assign each vertex a coordinate on the unit circle. Specifically, we place each vertex $i = 1, ... , n$ at coordinate $\left(\cos(\frac{2i\pi}{n}),\sin(\frac{2i\pi}{n})\right)$. Then, the edges are drawn as straight lines, and the crossings can be easily computed. An upper bound on the number of crossings for a drawing obtained by this method can be seen by following a simple counting argument:

\begin{lem}
The maximum number of crossings in a drawing obtained by the circle embedding scheme is ${n\choose4} = \frac{1}{24}(n^4 - 6n^3 + 11n^2 -6n)$.
\end{lem}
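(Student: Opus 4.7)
The plan is to prove the bound by a double-counting argument that matches each crossing to a unique 4-subset of vertices. The key geometric observation is that two straight-line chords of a circle cross in their interiors if and only if their four endpoints interleave around the circle; equivalently, if four distinct vertices appear in the cyclic order $a,b,c,d$ along the unit circle, then among the three possible ways to partition them into two disjoint pairs, namely $\{\{a,b\},\{c,d\}\}$, $\{\{a,c\},\{b,d\}\}$, and $\{\{a,d\},\{b,c\}\}$, only the second pairing yields crossing chords. This is immediate from convexity of the disk.

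From this observation, each unordered 4-subset $S \subseteq V(G)$ contributes at most one crossing to the drawing: there is at most one pair of edges in $E(G)$ whose endpoints induce the interleaving partition of $S$, and two edges sharing an endpoint cannot contribute (they meet at a vertex rather than cross in their interiors, and moreover their endpoints do not form a 4-subset). Summing over all 4-subsets of $V(G)$ gives at most $\binom{n}{4}$ crossings. To see that this upper bound is tight, I would note that for $G = K_n$ every chord is present, so every 4-subset contributes exactly one crossing, achieving the maximum.

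Finally, the closed-form identity $\binom{n}{4} = \frac{n(n-1)(n-2)(n-3)}{24} = \frac{1}{24}(n^4 - 6n^3 + 11n^2 - 6n)$ follows by elementary expansion of the falling factorial. I do not foresee any real obstacle; the entire proof rests on the standard interleaving characterisation of crossing chords, and the remaining work is purely combinatorial counting. The one minor subtlety worth mentioning is that for certain values of $n$ (e.g.\ $n=6$) several chords may pass through a common interior point, violating the drawing definition; this is handled either by an arbitrarily small perturbation of the vertex positions, which leaves the set of crossing pairs unchanged, or by interpreting the crossing count directly as the number of interleaving edge pairs.
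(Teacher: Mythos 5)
Your proposal is correct and follows essentially the same route as the paper: both arguments rest on the observation that each $4$-subset of vertices placed on the circle corresponds to exactly one interleaving pair of chords, so the count is $\binom{n}{4}$, attained by $K_n$. Your version is somewhat more careful (you justify the upper bound for general $G$ and flag the degenerate concurrent-diagonal issue for even $n$, which the paper's proof silently ignores), but the underlying idea is identical.
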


\begin{proof}
The maximum number of crossings is attained by the complete graph $K_n$.  In $K_n$, label the vertices from 1 to $n$ in a clockwise fashion, then any set of 4 vertices $\{ a,b,c,d \}$, where $a<b<c<d$, corresponds to exactly one crossing involving the edges $(a,c)$ and $(b,d)$.  Thus the total number of crossings is ${n\choose4}$. 
\end{proof}

The second initial embedding scheme, which we call ``{\em planar}", utilises a sequence of solutions to the star insertion problem. This idea has been considered as a heuristic for crossing minimisation in its own right (e.g. see Chimani et al \cite{chim3}), and involves constructing an embedding in a way which is similar to the planarisation method.  We begin by finding any chordless cycle of $G$ (if none exist then $G$ is acyclic and $cr(G) = 0$) along with an embedding $\Pi$ of this cycle, then iteratively perform the following:

\begin{enumerate}
\item Find a vertex $v \in V(G)$ which is not yet in $\Pi$, and such that there exists at least one edge in $E(G)$ which connects $v$ to a vertex already present in $\Pi$.  Denote by $F$ the set of all edges between $v$ and any vertices already present in $\Pi$.
\item Find a face $f$ of $\Pi$ such that a crossing minimal star insertion path, into $f$, of the star comprising of $v$ and the edges in $F$, introduces the least number of crossings among all faces of $\Pi$.
\item Insert, into $f$, the star comprising of $v$ and the edges in $F$ according to a crossing minimal star insertion path.
\item Replace each introduced crossing with a dummy vertex of degree 4 to obtain a planar graph, and compute a new combinatorial embedding.  Call this new embedding $\Pi$ and begin the next iteration.
\end{enumerate}

At each step of the procedure we are building upon the embedding, one vertex at a time, until we have an embedding corresponding to some drawing of the full graph $G$. As will be demonstrated in Section \ref{sec:experiments}, this method, although still computationally efficient, is in practice slower than the circle embedding, particularly for dense instances.  However, in Section \ref{sec:experiments} it will also be seen that this method tends to result in many fewer crossings, and hence substantial processing time is saved in the subsequent iterations of the main heuristic.  For this reason, this is the default embedding choice in our implementation of the heuristic.

The third initial embedding scheme, which we call ``{\em spring}", comes from the area of force-directed graph drawing. In \cite{Kawai} Kamada and Kawai describe a method for drawing a graph which minimises the energy of a spring model representation of the graph.  The resulting number of edge crossings is not taken into consideration in the spring model, however, especially for the case of sparse graphs, it will be demonstrated in Section \ref{sec:experiments} that the resulting drawings often provide a initial embedding with relatively few crossings. Of course, there are other force-directed graph drawing algorithms which could be used (e.g. see \cite{Eades,FM3}) and we make no claim here that \cite{Kawai} is the best for use in our heuristic.

It should be noted that, technically, any combinatorial embedding corresponding to a valid drawing of $G$ can serve as an initial embedding.  Indeed, our heuristic could be applied as a post-processing step of the planarisation method, or any other similar heuristic which results in a valid drawing.  To accommodate this, we have included in our implementation an option for user to specify their own initial combinatorial embedding, or to provide vertex coordinates for a straight-line drawing obtained from any drawing routine.

\subsection{Minimisation schemes}
\label{sec:minim}
There is a certain amount of freedom in the choice of how the heuristic descends towards its solution and we call these choices minimisation schemes. In particular, we discuss three possible minimisation schemes here.

The first minimisation scheme, which we call ``{\em first}" works as follows. We consider vertices one at a time, in the order of their labels. In the first iteration, the first vertex considered is the one with the earliest label, and in subsequent iterations the first vertex considered is the one that follows the vertex that was re-inserted in the previous iteration. As soon as a vertex is found which can be re-inserted in such a way that the number of crossings is reduced, we fix this improved position and begin the next iteration.

The second minimisation scheme, which we call ``{\em best}", works as follows. We consider each of the vertices, and determine which should be re-inserted so as to gain the greatest reductions in crossings. Then, we fix the improved position of that vertex and begin the next iteration.

The third minimisation scheme, which we call ``{\em biggest face}", comes from an observation made during experimentation; re-inserting a vertex $v$ into the face of $\Pi - v$ with the most edges (the `biggest face') often provides an improvement.  Intuitively this makes sense as the biggest face is `close' to a relatively large number of vertices.  This scheme allows for a significant speed increase during the early iterations because we may assume that the vertex can be placed in the biggest face and then find the shortest paths only once, using the dual vertex corresponding to the biggest face as the source, as opposed to the other schemes which require shortest paths to be computed up to $\Delta (G)$ times.  As will be shown in Section \ref{sec:runtime}, computing the shortest paths is the most time-consuming process in our heuristic and hence for dense graphs, where $\Delta (G) = \Theta(n)$, we gain a significant speed increase.  If the biggest face does not provide an improvement, other faces can then be checked according to one of the other minimisation schemes.  In our implementation, if it happens that the biggest face does not provide an improvement for a certain number (specified by the user) of consecutive iterations, we stop checking the biggest face first and instead continue with a different minimisation scheme from that point forward.

\subsection{Efficiently handling the dual graph}
\label{sec:dualgraph}
In each iteration, and for each vertex considered, the steps of the heuristic require the dual graph of the current embedding minus one vertex.  It is possible that we may need to consider many or even all of the vertices, particularly if we use the ``best" minimisation scheme. It is obviously undesirable to reconstruct this dual graph for every vertex, and so we use a simple updating procedure to avoid this.  We compute the dual graph once per iteration, with all vertices present. Then, each time a vertex (along with its incident edges) is deleted from $G$, the result in the embedding is that some pairs of faces (on either side of the planarised edges being deleted) are merged into single faces.  In the computed dual graph, this corresponds to contracting the dual edge connecting the two faces on either side of each of these planarised edges (see Figure \ref{fig2}).  Recall that each edge of the embedding corresponds precisely to an edge of the dual graph.  We keep these edge indices consistent in our implementation to help simplify the above process.

\begin{figure}[H]
\begin{centering}
\includegraphics[width=0.95\linewidth]{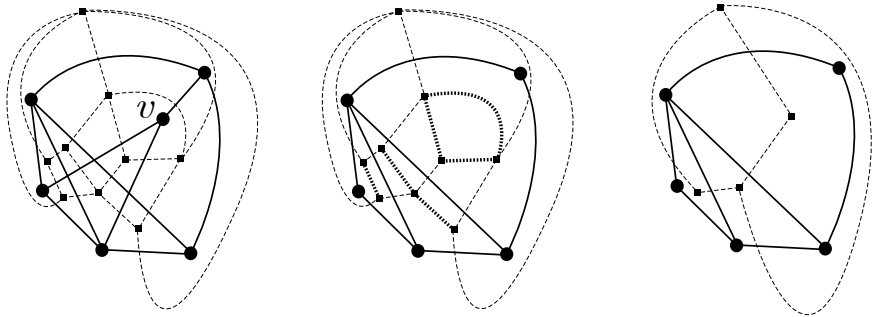}
\caption{Edge deletions and the corresponding edge contractions in the dual graph.  Dual edges are dashed and dual vertices are squares.  Vertex $v$ is to be deleted from $G$.  The middle picture has $v$ deleted and the thick dashed edges are the dual edges which are to be contracted.  The right picture is the result after multi-edges have been discarded. \label{fig2}}
\end{centering}
\end{figure}

\subsection{Pre- and post-processing schemes}
\label{sec:prepost}
Pre-processing schemes for crossing number heuristics are well understood and are reported in \cite{gut:mutz} and \cite{chim4}.  We briefly outline the usual pre-processing schemes.  The crossing number of a disconnected graph is the sum of crossing numbers of each of its connected components.  Similarly, the crossing number of a 1-connected graph is the sum of crossing numbers of its maximal bi-connected components. Therefore, we can decompose any input graph into its biconnected components and handle them individually. One benefit is that this allows us to assume that any graph submitted to our heuristic is biconnected, and hence we can assume that any graph with one vertex removed is connected.

Due to the desire to compare our heuristic to current methods, we have not currently implemented any post-processing schemes.  However several effective post-processing strategies are discussed in \cite{chim4} and could be appended to our heuristic if desired.

\subsection{Data structures}
\label{sec:datastruct}
To store a combinatorial embedding $\Pi$, a list structure containing the following information is utilised:  For each edge $e = (u,v)$, this list stores $u$ and $v$ along with four indices; the edge index of the edge immediately clockwise from $e$ around vertex $u$, the edge index of the edge immediately anti-clockwise from $e$ around vertex $u$, and then likewise for vertex $v$.  An example of this list can be seen in Figure \ref{fig3}.

The following list structures allow for the efficient modifcations of the embedding at each iteration.  The {\em crossing order} of an edge $e = (u,v)$ where $u<v$ is a list of the edges which currently cross $e$ in the order starting from the closest crossing to $u$.  Along with the crossing order list, there is the {\em crossing orientation list}.  The crossing orientation is essentially the cyclic order of edges around a dummy vertex in the embedding.  Suppose that within the crossing order entries of edge $e_1=(u_1,v_1)$, we have the entry $e_2=(u_2,v_2)$ where $u_1<v_1$ and $u_2<v_2$.  Then the corresponding crossing orientation entry is stored as 1 to indicate that the order of the edges when traversing clockwise around the dummy vertex have the end-vertices $u_1,u_2,v_1,v_2$, or -1 to indicate that the order is $u_1,v_2,v_1,u_2$.  Note that these are the only two possible orders (see Figure \ref{fig4} for an example).

One difficulty arising from the combination of using these data structures and working in a fixed embedding scheme is that a pair of edges may cross each other more than once.  Of course, it is known that in an optimal embedding this is never the case.  However, it can arise during an intermediate step of the heuristic.  If this happens, the crossing order list has no information about which entry corresponds to which crossing.  To avoid this confusion, if edges $e_1$ and $e_2$ cross each other more than once, then $e_1$ is subdivided into a chain of edges such that none of the resulting edges cross $e_2$ more than once.  A check is then performed in future iterations to see if the set of edges resulting from an earlier subdivision still cross any edge more than once.  If not, those subdivisions are removed and the edges are merged back into a single edge.  It is a simple excercise to show that by the time the heuristic concludes, all previous subdivisions have been reverted.  Note that, in practice, these subdivisions are a rare occurance.

\begin{figure}
\begin{centering}
\includegraphics[width=0.8\linewidth]{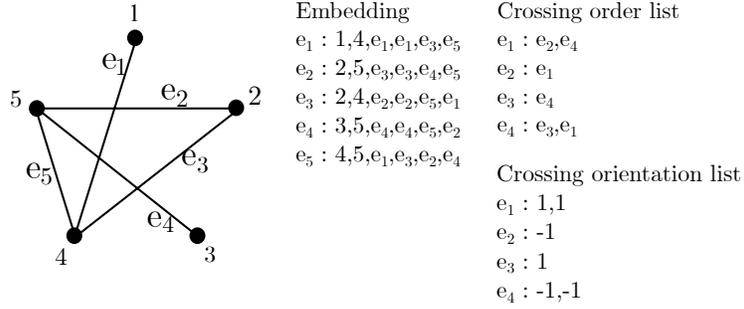}
\caption{A drawing associated with a combinatorial embedding and an example of the data structures utilised to store the embedding.  For an edge $e = (u,v)$ where $u<v$, the entries in the embedding list are: $u$, $v$, the edge clockwise from $e$ around $u$, the edge anti-clockwise from $e$ around $u$, the edge clockwise from $e$ around $v$, the edge anti-clockwise from $e$ around $v$. \label{fig3}}
\end{centering}
\end{figure}

\begin{figure}
\begin{centering}
\includegraphics[width=0.67\linewidth]{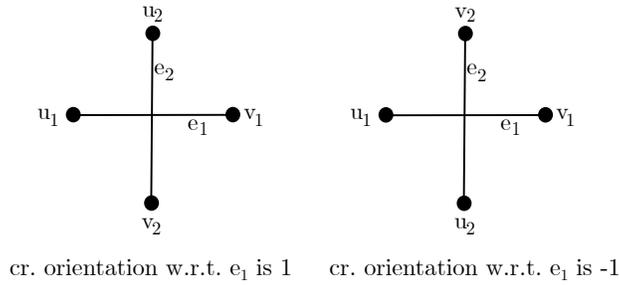}
\caption{If $e_1=(u_1,v_1)$ crosses $e_2=(u_2,v_2)$ where $u_1<v_1$ and $u_2<v_2$, the two possibilites for the crossing orientation are displayed. \label{fig4}}
\end{centering}
\end{figure}

\section{Runtime and implementation}
\label{sec:runtime}
We now discuss the runtime of each of the procedures and show that the iterations of the heuristic run in $O((k+n)m)$ time where $k$ is the number of crossings in the drawing associated with the current embedding of $G$.  Pseudocode for the main loop and two subroutines are displayed in Algorithms 1-3.  The code demonstrates the first minimisation scheme discussed in Section \ref{sec:minim} where an improvement is taken as soon as it is found.  There is a level of abstraction left in the pseudocode due to the numerous ways that one could perform the required operations; highly efficient C and MATLAB implementations of the heuristic are available at http://fhcp.edu.au/quickcross.  In the discussion below we refer to the pseudocode and summarise the methods used in our implementation.
%

\subsection{Implementation}

First we discuss the operations involved in Algorithms 1-3.  During the \textsc{main loop} procedure, we remove vertex $v$ and its incident edges, which possibly reduces the current number of crossings.  Then, after identifying the best possible new placement for $v$ using the \textsc{sip}$(G,\Pi^*,v)$ procedure, we have a new number of crossings for $v$'s potential placement and this number is \textit{new cr}.  Hence if  $\textit{new cr} < \textit{current cr}$ then we have found a drawing with fewer crossings.  Once an improvment has been found, $\Pi$ is updated to reflect the new placement and this involves updating each of the data structures discussed in Section \ref{sec:datastruct}.  If an edge is drawn such that it crosses some other edge multiple times, then we subdivide that edge to avoid confusion in the data structures as also discussed in Section \ref{sec:datastruct}.  Similarly, if a set of edges resulting from an earlier subdivision no longer crosses any edge multiple times, then the previous subdivisions are reverted.

From the current combinatorial embedding $\Pi$, we compute the dual graph $\Pi^*_0$ (which is then copied into $\Pi^*$ for modifications).  Step P1 of Procedure 1 asks to compute the reduced combinatorial embedding $\Pi-v$.  This corresponds to removing $v$ from $\Pi$ and a set of planarised edges.  Because at this stage, it is unknown if the embedding $\Pi-v$ will be utilised for the next iteration, it is quicker to instead modify the dual graph $\Pi^*$ to reflect the removal of $v$.  This process is done inside of the procedure \textsc{remove}$(G,\Pi^*,v)$ according to the discussion on contractions in the dual graph in Section \ref{sec:dualgraph}.  Later, if $\Pi-v$ will be utilised for the next iteration, then it is computed, along with the new placement of $v$.

The procedure \textsc{sip}$(G,\Pi^*,v)$ solves the fixed embedding star insertion problem for the vertex $v$ (into $\Pi-v$).  The contractions in $\Pi^*$, discussed in Section \ref{sec:newmethod}, reduce the number of times that shortest paths need to be computed, which is the most costly process of the heuristic.  Then the optimal placement for $v$ is given by \textit{newface}, and shortest paths are computed once more with \textit{newface} as the source vertex.  The list \textit{shortest paths} stores the tree paths from \textit{newface} to each $w \in N_G(v)$.

\subsection{Runtime}

In this subsection we work through the lines of the \textsc{main loop} pseudocode and discuss the time complexity of each operation.  The majority of the work is simple vector manipulation and so some detail is left out here.  As will be seen, in each iteration, the steps performed take no more than $O((k+n)m)$ time.

At lines 5 and 6 we find the faces and dual graph of $\Pi$.  This can be achieved by scanning the edges of $\Pi$ in a clockwise manner and time required for this is $O(k+m)$.

Next, during the loop at line 7, we delete a vertex $v$ and search for a better placement for $v$.  Potentially every vertex may be tried before the algorithm moves on.  So the following procedures may be repeated up to $n$ times per iteration.

In the procedure \textsc{remove}, which is entered at line 9, a number of edge contractions are performed.  In the drawing of $G$ which is associated with the current embedding $\Pi$, let $k_v$ denote the number of crossings on the edges incident to vertex $v$.  Then the time required for the corresponding edge contractions is $O(k_v + d(v))$ for each $v$.  Summing over all $n$ vertices in the aforementioned loop, this becomes a worst case of $O(k+m)$.

In the procedure \textsc{sip}, the contractions at line 4 can be performed in \\
$O(\sum_{w \in N(v)} d(w))$ time, and summing over all vertices, this becomes $O(nm)$.  At line 5 we find shortest paths on an unweighted planar graph (a simple breadth-first search) which can be done in $O(k+n)$ time and this is repeated for each $w \in N_G(v)$ by the loop at line 2.  Then, summing over all vertices, this becomes $O((k+n)m)$.

Back in the \textsc{main loop} the following procedures happen only once an improvement has been found, so only once per iteration.  At line 17 we fix the new placement and update the existing data to reflect the new placement.  Updating the crossing order list and crossing orientation list discussed in Section \ref{sec:datastruct} can be performed in $O(k)$ time.  Updating the 4 clockwise and anticlockwise numbers discussed in Section \ref{sec:datastruct} can be done in $O(k+m)$ time.

Any required subdivisions are checked for at line 18 by scanning the crossings on every edge to check whether it crosses the same edge more than once.  This scan can be performed in $O(k)$ time.  If a subdivision is required then the corresponding lists need to be updated and this also happens in $O(k)$ time.  Note that these subdivisions are a very rare occurance in practice and when they do occur, a check is put in place at each iteration thereafter to see if the subdivision can be undone.  This additional check can be performed in $O(k)$ time.  If a subdivision is required to be undone, the corresponding lists need to be updated and this happens in $O(k+m)$ time.  We remark that any subdivisions do have an effect on the runtime of future iterations because they cause $n$ to grow, and bounding the time increase is difficult.  Because these subdivisions are rare cases which are usually removed swiftly in subsequent iterations, we conclude that for practical purposes the additional runtime is negligible.  We also remark that the total number of iterations is at most the number of crossings in the initial drawing of $G$.  Hence a na\"ive bound on the total runtime is $O((\bar{k}+n)\bar{k}m)$ where $\bar{k}$ is the initial number of crossings.  This emphasises the dependency between the quality of the initial drawing and the overall performance of the heuristic.

\begin{algorithm}
\begin{algorithmic}[1]
\Procedure{MAIN LOOP}{}
\State \textit{current cr} $\gets$ $cr_D(G)$
\While {true}
\State improvement found $\gets$ \textit{false}
\State Find the faces of $\Pi$.
\State $\Pi_0^* \gets$ dual graph of $\Pi$
\For {$v \in V(G) $}
\State $\Pi^* \gets \Pi^*_0$ (make a copy of $\Pi^*_0$)
\State $\Pi^* \gets$ REMOVE$(G,\Pi^*,v)$
\State $(\textit{new cr, newface, shortest paths}) \gets$ SIP$(G,\Pi^*,v) $
\If {$\textit{new cr} < \textit{current cr}$}
\State improvement found $\gets true$
\State \textbf{break}
\EndIf
\EndFor
\If {improvement found}
\State Update $\Pi$ to reflect new placement using \textit{newface} and \textit{shortest}
\hspace*{1.75cm} \textit{paths}.
\State Check if any subdivisions are needed.
\State Check if any previous subdivisions can be removed.
\State $\textit{current cr} \gets \textit{new cr}$
\State \textbf{continue}
\Else
\State \textbf{break}
\EndIf
\EndWhile
\State \textbf{return} $(\textit{current cr,}\Pi)$
\EndProcedure
\caption{Main procedure of the heuristic.  Inputs are a combinatorial embedding $\Pi$ corresponding to some intial drawing $D$ of $G$, which is represented by the data structures discussed in Section \ref{sec:datastruct}. \label{alg1}}
\end{algorithmic}
\end{algorithm}

\begin{algorithm}
\begin{algorithmic}[1]
\Procedure{REMOVE$(G,\Pi^*,v)$}{}
\For{$e^* \in E(\Pi^*)$}
\If{$e^*$ corresponds to an edge of $G$ which is incident to $v$}
\State Contract $e^*$.
\EndIf
\EndFor
\State \textbf{return} $(\Pi^*)$
\EndProcedure
\caption{Vertex deletion procedure.  Given a dual graph $\Pi^*$ and a vertex $v$ of $G$, this performs edge contractions in the dual according to the discussion in Section \ref{sec:dualgraph}. \label{alg2}}
\end{algorithmic}
\end{algorithm}

\begin{algorithm}
\begin{algorithmic}[1]
\Procedure{SIP$(G,\Pi^*,v)$}{}
\For{$w \in N_G(v)$}
\State $\Pi^{**} \gets \Pi^*$ (make a copy of $\Pi^*$)
\State In $\Pi^{**}$, contract the cycle formed by dual edges corresponding to
\hspace*{1.1cm} edges incident to $w$ in $\Pi$, call the contracted vertex $w_d$.
\State $\textrm{dist}_w \gets$ Shortest path algorithm$(\Pi^{**},w_d)$.
\State Set the dist of vertices contracted to form $w_d$ to zero.
\EndFor
\State $\textit{newface} \gets \textrm{argmin}_k(\sum_{w \in N(v)} \textrm{dist}_w(k))$
\State \textit{shortest paths }$\gets$ Shortest path algorithm$(\Pi^*,$\textit{newface}$)$
\State \textbf{return} (\textit{new cr}, \textit{newface}, \textit{shortest paths})
\EndProcedure
\caption{Star insertion problem solver.  Given a dual graph $\Pi^*$ along with a vertex $v$ of $G$, this performs edge contractions in the dual graph according to the discussion in Section \ref{sec:newmethod}.  Then the fixed embedding version of the star insertion problem is solved for $v$. \label{alg3}}
\end{algorithmic}
\end{algorithm}

\newpage
\section{Experiments}
\label{sec:experiments}

\subsection{Experimental setup}

In this section, we consider the performance of our proposed heuristic on various sets of instances. As mentioned previously, we have implemented our heuristic in both C and \textsc{MATLAB}, and here we report on the C implementation, which we call QuickCross.

Each of the experiments reported on here were conducted on a 2.6GHz AMD Opteron 6282 SE with 4GB RAM and running Centos 6.7 OS. In order to compare the various schemes discussed in Section \ref{sec:designmeth}, each experiment is repeated nine times, once for each combination of the three initial embedding schemes ({\em circle, planar, spring}), and the three minimisation schemes ({\em first, best, biggest face (bf)}). Then, for each of these nine parameter settings, we try 100 different random permutations of the vertex labels and record the result with the least number of crossings.  In such a case, we shall say that the graph was {\em run with 100 random permutations}.

We will consider four sets of instances, the first two of which contain sparse graphs, and the latter two of which contain dense graphs. In particular, the sparse instances considered are two of the sets of instances which were used for benchmarking crossing minimisation heuristics in \cite{chim4}, \cite{gut} and \cite{gut:mutz}. They are known respectively as the KnownCR graphs and the Rome graphs. The dense instances considered are sets of complete graphs, and complete bipartite graphs. We now briefly describe the experiments that will be carried out for each of the sets.

\begin{itemize}\item {\bf KnownCR graphs} - these are a set of instances containing between 9 and 250 vertices, first collected by Gutwenger \cite{gut}, which can be further partitioned into four families of graphs as follows:
\begin{itemize}
\item $C_i \times C_j$: the Cartesian product of the cycle on $i$ vertices with the cycle on $j$ vertices.  The instances contain graphs with $3 \leq i \leq 7$ and $j\geq i$ such that $ij \leq 250$.\\

\item $G_i \times P_j$: the cartesian product of the path on $j+1$ vertices with one of the 21 non-isomorphic connected graphs on 5 vertices, where $i$ denotes which of the 21.  The instances contain graphs with $3 \leq j \leq 49$.\\

\item $G_i \times C_j$: the cartesian product of the cycle on $j$ vertices with one of the 21 non-isomorphic connected graphs on 5 vertices, where $i$ denotes which of the 21.  The crossing number of these graphs are only known for some of the $G_i$ and only these cases are included.  The instances contain graphs with $3 \leq j \leq 50$.\\

\item The Generalised Petersen graphs P$(j,2)$ and P$(j,3)$, on $2j$ vertices.  We shall only use those of type P$(j,3)$ as P$(j,2)$ (studied in \cite{g13}) are easy for heuristics to solve as has already been observed in \cite{chim4}.  The instances contain graphs with $9 \leq j \leq 125$.\\
\end{itemize}
Unlike the other sets of instances in this section, all of the crossing numbers for the KnownCR instances are known, and hence we can compare how close QuickCross gets to the correct value for various scheme combinations. In particular, we report on the average relative deviation between the crossing numbers and the values obtained by QuickCross. In order to illustrate the work performed during the main loop of QuickCross, we also report the average relative deviation after only the initial embedding is finished. We also compare the runtimes of the various scheme combinations, separated into the time spent producing the initial embedding, and the time spent in the main loop of the heuristic. Finally, results on solution quality for other crossing minimisation heuristics have been reported in \cite{chim4}, and so we compare our results to theirs.

\item {\bf Rome graphs} - these are a set of 11,528 graphs which have been constructed from real-life applications, first described by Di Battista et al \cite{graphdrawing}. They contain between 10 and 100 vertices, and are very sparse with average edge density of 1.35. The larger graphs in this set have unknown crossing numbers, since they are too large for the current exact methods to solve. Hence, it is impossible to report on how close QuickCross gets to the true crossing number. However, in \cite{gut:mutz} and \cite{gut}, the largest graphs in the Rome graph set were considered, that is, the 140 graphs with exactly 100 vertices. For these graphs, the average numbers of crossings found for various crossing minimisation heuristics were reported. We compare the results of QuickCross to these values, and report on the runtimes for each of the scheme combinations. The runtimes are separated into time spent producing the initial embedding, and time spent in the main loop of the heuristic

%

\item {\bf Complete graphs} - Although the crossing number of the complete graph $K_n$ is not known for for $n \geq 13$, the value is conjectured, and typically assumed to be correct. We compare the nine combinations of schemes to see how close to the conjectured value each of them is able to get, for various sizes of complete graphs up to $n = 50$. We indicate how many crossings are obtained after the initial embedding, as well as at the conclusion of the heuristic. We also provide the runtimes, again separated into time spent producing the initial embedding, and time spent in the main loop of the heuristic.

\item {\bf Complete bipartite graphs} - Much like the complete graphs, the crossing number of the complete bipartite graph $K_{n_1,n_2}$ is only known in general for $n_1 \leq 6$, but the value is conjectured and typically assumed to be correct. Again, we compare the nine combinations of schemes to see how close to the conjectured value they can get for values up to $n_1,n_2 = 40$, and report the same data as for the Complete graphs.

\end{itemize}

It should be noted that there are two other sets of instances which were considered in \cite{chim4}, namely the AT\&T graphs and the ISCA graphs. However, we have chosen not to include them in our experiments for the following reasons. First, the crossing numbers of the instances contained in them are unknown, so we cannot report on how close QuickCross gets to the crossing number. This is also true for the Rome graphs, however for that set there have been experiments reported on in \cite{gut} that list the average number of crossings found, and so we can do a meaningful comparison. In contrast, for the AT\&T and ISCA graphs, the only reported results (e.g. see \cite{chim4}) perform comparisons to the best solutions found by the heuristics involved within that experiment. There is no meaningful way to compare the results of QuickCross to the results in \cite{chim4}, and hence we omit these two sets of instances from consideration here.

\subsection{KnownCR results}\label{sec:knowncr}
We partitioned the graphs into the four families described above and ran each with the 9 possible combinations of schemes.  Each graph was run with 100 random permutations and the minimum found solution was compared to the actual crossing number by computing the {\em percent relative deviation}.  Let $k$ denote the minimum found solution, then the percent relative deviation from $cr(G)$ is: $100(k-cr(G))/cr(G)$.  The average of these numbers was then taken over each of the four families of graphs and these results are displayed in Figure \ref{fig5}, which we now describe in detail.

For each of the nine scheme combinations in Figure \ref{fig5}, there are two bars displayed, specifically a light grey and a dark grey bar. The dark grey bar indicates the average percent relative deviation once the initial embedding is completed (but before the main loop of the heuristic is run), while the light grey bar indicates the average percent relative deviation at the conclusion of the heuristic.  Therefore a large difference between the dark grey and light grey bars represents a large reduction in crossings achieved during the heuristic.  We append the five best reported methods from \cite{chim4} to Figure \ref{fig5} for comparison.  Each of these five methods also utilised 100 random permutations and then chose the minimum found solution.

We observe that for the graphs of type $C_i \times C_j$ and $G_i \times C_j$, the {\em circle} embedding and the {\em planar} embedding perform very well and they outperform the other results by approximately $2.5 \%$, including those from \cite{chim4}.  For the $C_i \times C_j$ and $G_i \times C_j$ graphs, the {\em spring} embedding performs relatively poorly.  On the other hand, for the graphs of type $G_i \times P_j$ and P$(j,3)$, the {\em circle} and {\em planar} embeddings perform poorly and the {\em spring} embedding performs better.  For the P$(j,3)$ graphs the {\em spring} embedding produced average relative deviations which are approximately equal to the best reported results in \cite{chim4}, while they are slightly worse for the $G_i \times P_j$ graphs.  The {\em best} scheme performed worse than {\em first} and {\em bf} under the same initial embedding scheme in almost all cases, with the sole exception of $G_i \times C_j$ and {\em planar} embedding.

Runtimes were analysed by taking an average over the 100 random permutations for each graph. In Figure \ref{fig6} we display the average runtimes to complete the initial embedding, while in Figure \ref{fig7} we display the average runtimes for the remainder of the heuristic.

We observe that, as indicated in Section \ref{sec:init}, the {\em circle} embedding computes an initial embedding the quickest, however it creates many additional crossings (see the dark grey bars in Figure \ref{fig5}), and consequently has a longer heuristic runtime.  Alternatively, the {\em planar} embedding scheme computes an embedding almost as quick and the embedding has far fewer crossings, which results in a significantly lower heuristic runtime.

\begin{figure}[h!]
\begin{centering}
\includegraphics[width=1\linewidth]{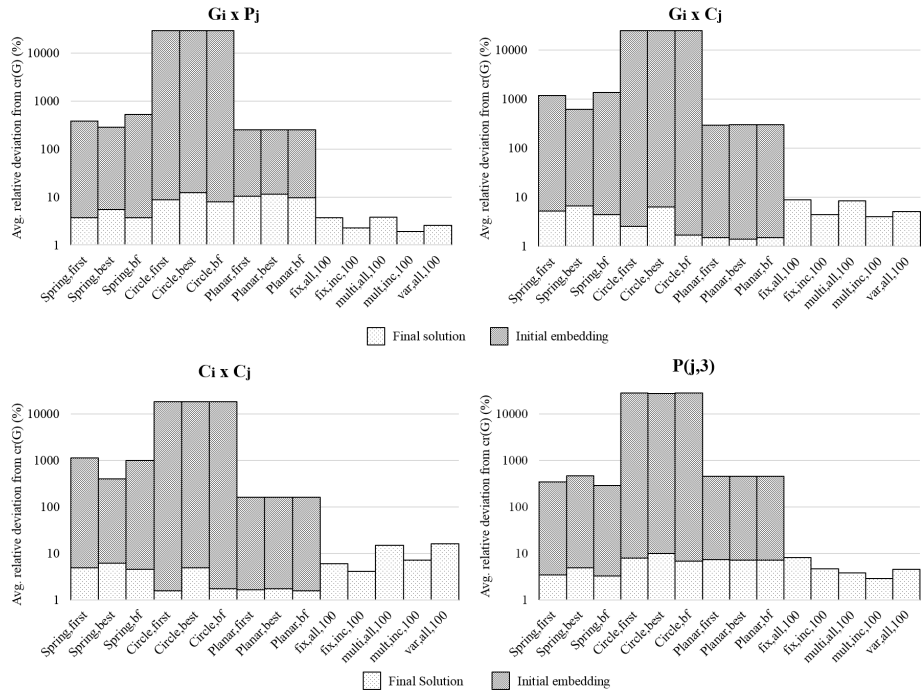}
\caption{Average percent relative deviations from the crossing numbers for four families within the KnownCR graphs. \label{fig5}}
\end{centering}
\end{figure}

\begin{figure}[h!]
\begin{centering}
\includegraphics[width=0.8\linewidth]{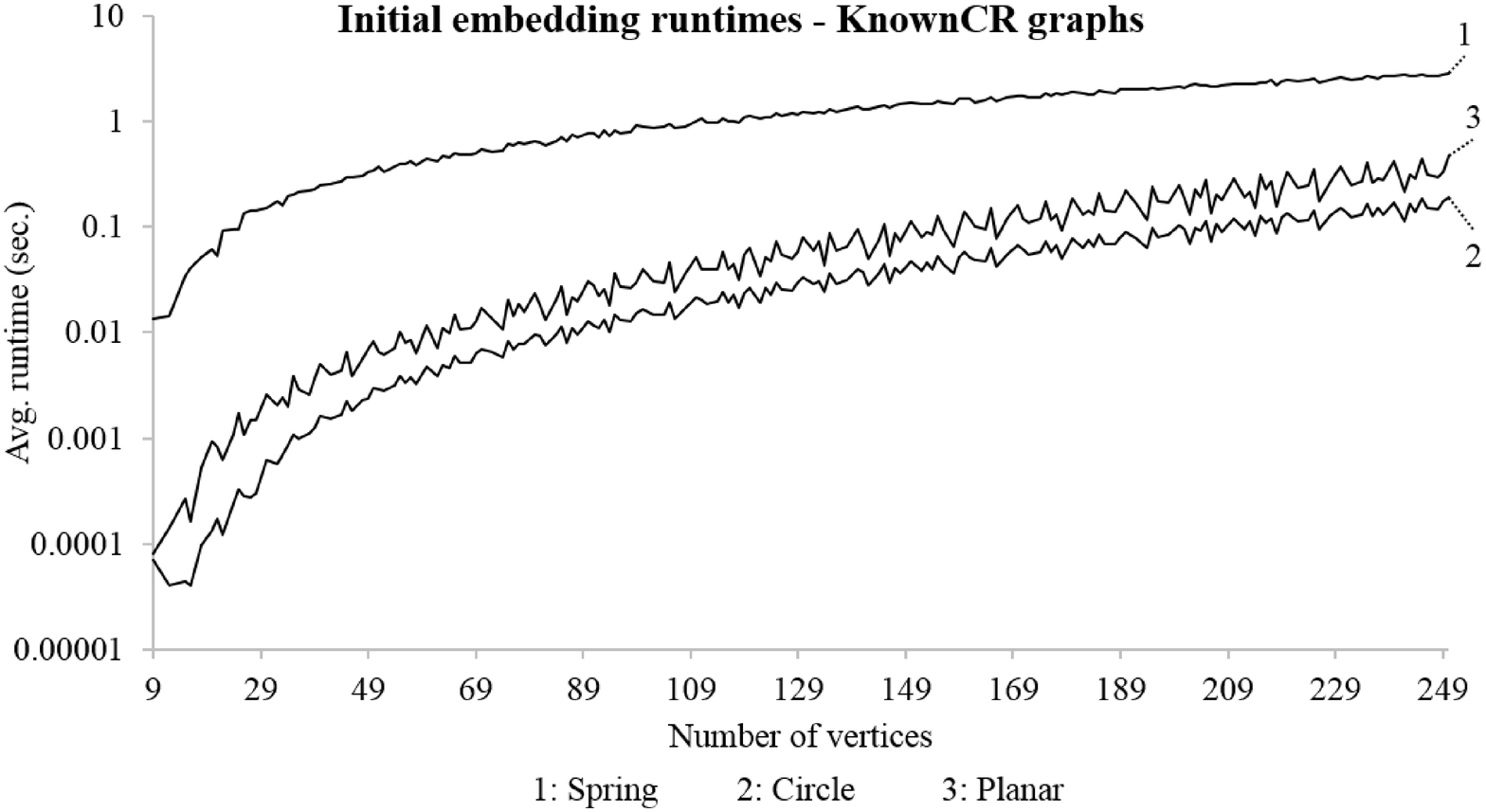}
\caption{Average runtime to produce an initial embedding (sec.) compared to number of vertices for the KnownCR graphs. \label{fig6}}
\end{centering}
\end{figure}

\begin{figure}[h!]
\begin{centering}
\includegraphics[width=0.8\linewidth]{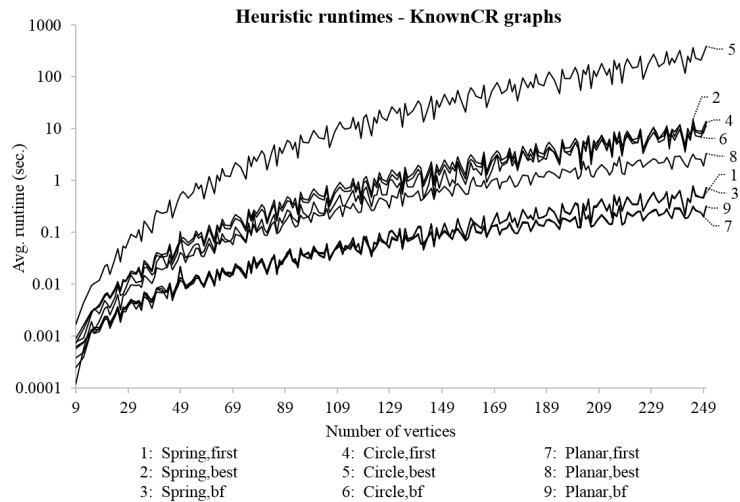}
\caption{Average heuristic runtime per random permutation (sec.) compared to number of vertices for the KnownCR graphs. \label{fig7}}
\end{centering}
\end{figure}

\newpage
\subsection{Rome graphs}
For the 140 graphs on 100 vertices in the Rome graph set, we repeat two experiments that have been previously performed in \cite{gut}.  In the first experiment, for each of the 9 possible combinations of schemes, each graph was run with 100 random permutations. In the second experiment, the number of random permutations is increased to 500.  In each case, we record the smallest number of crossings found for each graph, and report on the average minimum number of crossings over the 140 graphs in Tables \ref{table:1} and \ref{table:2} respectively.  In order to demonstrate the work performed by the main part of the heuristic, we provide the number of crossings at the completion of the initial embeddings as well. We also include the numbers obtained from the previous experiments in \cite{gut} for comparison of solution quality.

In addition, we include average runtimes in Table \ref{table:1}, separated into time spent in the main part of the heuristic, and time spent in the initial embedding. Since the experiments in \cite{gut} are from 2010, it is not meaningful to compare runtimes, and so we report only on the runtimes of QuickCross. Also, since the average runtime after 100 permutations is effectively the same as after 500 permutations, we omit the runtimes from Table \ref{table:2}.

For these experiments, we observe that after 100 random permutations, the {\em planar} embedding scheme outperforms both {\em circle} and {\em spring} in both solution quality and average runtime.  Interestingly, the {\em best} minimisation scheme outperforms the other minimisation schemes under the same embedding scheme in each case.  This result is different to the KnownCR graphs in which the {\em best} scheme was usually the worst performing scheme.  With the exception of {\em npc-var-inc-100}, every configuration compares favourably to the experiments in \cite{gut}. After 500 random permutations, the {\em circle,best} configuration resulted in the smallest average crossings, and each of our nine schemes outperform the experiments in \cite{gut}. We do not include the average runtimes in Table \ref{table:2} as they are almost identical to those in Table \ref{table:1}.

\begin{table}[h!]
\begin{center}
\resizebox{0.9\textwidth}{!}{

\begin{tabular}{ c |c |c|c|c }
\multicolumn{5} {c} {\textbf{Rome graphs - 100 random permutations}} \\
 Method & \begin{tabular}{@{}c@{}}Avg. final\\crossings\end{tabular} & \begin{tabular}{@{}c@{}}Avg. initial\\crossings\end{tabular} & \begin{tabular}{@{}c@{}}Avg. heuristic \\ runtime (sec.)\end{tabular} & \begin{tabular}{@{}c@{}}Avg. embed \\runtime (sec.)\end{tabular}\\
\hline \rule{0pt}{2.6ex}
 planar,best & 25.757 & 54.664 & 0.0368 & 0.0061 \\
 planar,first & 25.779 & 54.664 &  0.0099 & 0.0061\\
 planar,bf & 25.800 & 54.664 &  0.0110 & 0.0061 \\
npc-var-inc-100 & 25.800 & {--} & {--} & {--}\\
 circle,best & 25.829 & 919.87 &  0.2547 & 0.0029 \\
 spring,best & 25.850 & 80.971 &  0.0469 & 0.2591 \\
 circle,bf & 25.886 & 919.87 &  0.0425 & 0.0029 \\
 circle,first & 25.900 & 919.87 &  0.0369 & 0.0029 \\
 spring,first & 25.950 & 80.971 &  0.0105 & 0.2591 \\
 spring,bf & 25.964 & 80.971 &  0.0118 & 0.2591 \\
 npc-fix-inc-100 & 26.600 & {--} &  {--} & {--}\\
 npc-var-all-100 & 27.200 & {--} &  {--} & {--}\\
 npc-fix-all-100 & 28.300 & {--} &  {--} & {--}\\
\end{tabular}}
\caption{The average number of crossings found over 100 permutations and the average heuristic and initial embedding runtime (sec.) per random permutation for the graphs on 100 vertices in the Rome graphs. \label{table:1}}
\end{center}

\end{table}

\begin{table}[h!]
\begin{center}
\resizebox{0.58\textwidth}{!}{
\begin{tabular}{ c |c}
\multicolumn{2} {c} {\textbf{Rome graphs - 500 random permutations}} \\
 Method & {Avg crossings} \\
\hline \rule{0pt}{2.6ex}
 circle,best & 25.157  \\
 planar,best & 25.214  \\
 planar,first & 25.229  \\
 planar,bf & 25.250  \\
 circle,first & 25.300  \\
 spring,best & 25.307 \\
 circle,bf & 25.313  \\
 spring,bf & 25.407  \\
 spring,first & 25.457  \\
 npc-var-inc-500 & 25.510 \\
 npc-fix-inc-500 & 26.090  \\
 npc-var-all-500 & 26.650  \\
 npc-fix-all-500 & 27.500  \\
\end{tabular}}
\end{center}
\caption{The average number of crossings found over 500 permutations for the graphs on 100 vertices in the Rome graphs.  The list is sorted by smallest average number of crossings. \label{table:2}}
\end{table}

Finally, we note that there was an additional experiment conducted on the Rome graphs contained in \cite{chim4}. However, in that experiment, rather than reporting on average numbers of crossings, the results were instead compared to the best known results discovered by the heuristics used in the experiment. Hence, there is no meaningful way to compare the results of QuickCross to these results.

\subsection{Complete graphs}
The crossing number of the complete graph $K_n$ is conjectured (e.g. see Guy \cite{hill}) to be equal to
$$H(n) := 1/4 \left \lfloor n/2 \right \rfloor \left \lfloor (n-1)/2 \right \rfloor \left \lfloor (n-2)/2 \right \rfloor \left \lfloor (n-3)/2 \right \rfloor.$$

Although this conjecture is widely believed to be correct, it has only been confirmed for $n \leq 12$ despite considerable effort to extend the results further \cite{pan}. We ran the graphs $K_n$ for $5 \leq n \leq 50$.  Each graph was run with 100 random permutations and the minimum found solution was compared to $H(n)$ by computing the percent relative deviation from $H(n)$.   These results are displayed in Tables \ref{table:3} for some selected values of $n$, and the runtimes are provided in Table \ref{table:4}.

For these graphs, we observe that when $n$ was odd, every scheme combination was able to obtain a drawing with $H(n)$ crossings. However, when $n$ was even, each scheme reached a value which was usually very close but not equal to $H(n)$. The average runtime under the {\em best} scheme is significantly higher than the other minimisation schemes simply due to the vast amount of additional work required to consider every vertex each iteration.

We now briefly look at the effect of the initial embedding schemes for these instances. In Table \ref{table:5}, we display the percent relative deviation at the conclusion of each of the initial embedding schemes. As can be seen, the {\em planar} initial embedding scheme often provides an embedding for which the number of crossings is very close or even equal to $H(n)$, and hence very little additional work is required by the heuristic. Since the {\em planar} initial embedding scheme is similar in character to the planarisation method, we conclude that heuristics based on the planarisation method would also be effective for these instances. Notably, unlike the sparser KnownCR and Rome instances, for these dense instances the {\em planar} initial embedding scheme takes much longer than the other two initial embedding schemes. This is because the amount of work performed by the {\em circle} and {\em spring} initial embedding schemes depends primarily on the number of vertices, rather than edges. However, the price paid by the {\em circle} and {\em spring} initial embedding schemes schemes is that they result in many more crossings than the {\em planar} scheme, and so the heuristic has to perform much more additional work to descend to a solution; also, the individual iterations (whose time depends on the current number of crossings) take longer as well. Overall, the {\em planar} initial embedding scheme performed the best in terms of both solution quality and total execution time for these instances.

\begin{table}[h!]
\resizebox{\textwidth}{!}{
\begin{tabular}{l|c|c|c|c|c|c|c|c|c}
\multicolumn{10}{c}{  \textbf{Final crossings (\%) for $K_{n}$}}\\
\hline
n & s,first & s,best& s,bf  & c,first & c,best & c,bf  & p,first & p,best& p,bf\\
\hline
20 &0 &0 &0 &0 &0 &0 &0 &0 &0\\
25 &0 &0 &0 &0 &0 &0 &0 &0 &0\\
30 &0.0105 &0.0209& 0.0209  &0.0209 &0&0.0419 & 0.0209 &0.0209 &0.0314 \\
35 &0 &0 &0 &0 &0 &0 &0 &0 &0\\
40 &0.0185 &0.0246& 0.0246  &0.0185 &0.0154 &0.0400 &0.0062&0.0062&0.0092 \\
45 &0 &0 &0 &0 &0 &0 &0 &0 &0\\
50 & 0.0169& 0.0266& 0.0229& 0.0169& 0.0145& 0.0507& 0.0024 &0.0024& 0.0036\\
\end{tabular}}
\caption{Percent relative deviations from $H(n)$ after the conclusion of the heuristic, for the complete graphs $K_n$.  The {\em spring}, {\em circle} and {\em planar} initial embedding schemes are denoted respectively as {\em s},{\em c} and {\em p}. \label{table:3}}
\end{table}

\begin{table}[h!]
\resizebox{\textwidth}{!}{
\begin{tabular}{l|c|c|c|c|c|c|c|c|c}
\multicolumn{10}{c}{  \textbf{Heuristic runtime (sec.) for $K_{n}$}}\\
\hline
n & s,first & s,best & s,bf  & c,first & c,best & c,bf & p,first & p,best& p,bf\\
\hline
20&	0.3978&	2.1947	&0.4572	&0.3978	&2.1816	&0.4592	&0.3721	&0.5217	&0.3872\\
25&	1.4625&	9.8596	&1.5680	&1.4418	&10.746	&1.6956	&0.5380	&0.5489	&0.5805\\
30&	7.0983&	63.244	&7.5867	&6.7814	&60.464	&8.0258	&3.1954	&7.9235	&2.8368\\
35&	15.984&	176.47	&18.201	&15.676	&180.46	&21.244	&5.2559	&4.8622	&5.4484\\
40&	61.769&	794.62	&63.593	&60.615	&749.43	&69.638	&22.096	&72.684	&18.803\\
45&	114.06&	1550.4	&116.29	&101.83	&1215.1	&131.83	&27.644	&28.133	&28.492\\
50&	280.45&	3930.3	&299.73	&272.84	&3582.1	&345.97	&82.517	&252.32	&72.003\\
\end{tabular}}
\caption{Average heuristic runtime (sec.) per random permutation for the complete graphs $K_n$.  The {\em spring}, {\em circle} and {\em planar} initial embedding schemes are denoted respectively as {\em s},{\em c} and {\em p}.  \label{table:4}}
\end{table}

\begin{table}[h!]
\begin{center}
\resizebox{\textwidth}{!}{
\begin{tabular}{l|c|c|c}
\multicolumn{4}{c}{  \textbf{Initial embedding crossings (\%)}}\\
\hline
$n$ & spring & circle & planar\\
\hline
20	&138.21&199.07	&0.3704\\
25	&124.52	&190.40	&0 \\
30	&122.72	&186.81	&0.1151\\
35	&118.41	&183.09	&0\\
40	&114.62	&181.27	&0.0492\\
45	&114.02	&179.22	&0\\
50	&112.72	&178.14 &0.0254\\

\end{tabular}
\quad
\begin{tabular}{l|c|c|c}
\multicolumn{4}{c}{  \textbf{Initial embedding runtime (sec.)}}\\
\hline
$n$ & spring & circle & planar\\
\hline
20	&0.0903&0.0046	&0.0368\\
25	&0.1641	&0.0113	&0.1435\\
30	&0.2378	&0.0260	&0.4690\\
35	&0.3334	&0.0494	&1.2009\\
40	&0.4855	&0.0935	&3.8485\\
45	&0.6783	&0.1441	&7.2532\\
50	&0.9905	&0.2239	&14.1899\\

\end{tabular}}
\caption{Percent relative deviations from $H(n)$ and average runtime (sec.) per random permutation after only the initial embedding for the complete graphs $K_{n}$. \label{table:5}}
\end{center}
\end{table}

\subsection{Complete bipartite graphs}
The crossing number of the complete bipartite graph $K_{n_1,n_2}$ is conjectured (e.g. see Zarankiewicz \cite{Zaran}) to be equal to
$$Z(n_1,n_2) := \left \lfloor n_1/2 \right \rfloor \left \lfloor (n_1-1)/2 \right \rfloor \left \lfloor n_2/2 \right \rfloor \left \lfloor (n_2-1)/2 \right \rfloor.$$

We ran the graphs $K_{n_1,n_2}$ for $5 \leq n_1 \leq n_2 \leq 40$.  Each graph was run with 100 random permutations and the minimum found solution was compared to $Z(n_1,n_2)$. For the sake of space, we only report on the cases where $n_1$ and $n_2$ are multiples of five. As can be seen in Table \ref{table:7}, QuickCross was successful in obtaining the conjectured optimum in all cases and for all scheme combinations, except $K_{30,30}$ and $K_{40,40}$ under the {\em circle, best} combination. We conclude that these graphs are relatively easy for this heuristic to obtain a high-quality solution, and we suspect that this is the case for other heuristic methods as well. However, although the conjectured optimum is easily reached for these graphs, the runtimes in Tables \ref{table:9} and \ref{table:10} are comparable to those for the complete graphs, due to edge density. Again, the {\em planar} initial embedding takes a long time compared to the other initial embedding schemes, but nonetheless results in the shortest overall execution time.

\begin{table}[h!]
\begin{center}
\resizebox{0.85\textwidth}{!}{
\begin{tabular}{l|l|c|c|c|c|c|c|c|c|c}
\multicolumn{11}{c}{  \textbf{Final crossings (\%) for $K_{n_1,n_2}$}}\\
\hline
$n_1$ & $n_2$ & s,first & s,best & s,bf  & c,first & c,best & c,bf & p,first & p,best& p,bf\\
\hline
20 &20 &0	&0	&0	&0	&0	&0	&0	&0	&0\\
20 &25 &0	&0	&0	&0	&0	&0	&0	&0	&0\\
20 &30 &0	&0	&0	&0	&0	&0	&0	&0	&0\\
20 &35 &0	&0	&0	&0	&0	&0	&0	&0	&0\\
20 &40 &0	&0	&0	&0	&0	&0	&0	&0	&0\\
25 &25 &0	&0	&0	&0	&0	&0	&0	&0	&0\\
25 &30 &0	&0	&0	&0	&0	&0	&0	&0	&0\\
25 &35 &0	&0	&0	&0	&0	&0	&0	&0	&0\\
25 &40 &0	&0	&0	&0	&0	&0	&0	&0	&0\\
30 &30 &0	&0	&0	&0	&0.0068	&0	&0	&0	&0\\
30 &35 &0	&0	&0	&0	&0	&0	&0	&0	&0\\
30 &40 &0	&0	&0	&0	&0	&0	&0	&0	&0\\
35&35 &0	&0	&0	&0	&0	&0	&0	&0	&0\\
35 &40 &0	&0	&0	&0	&0	&0	&0	&0	&0\\
40 &40 &0	&0	&0	&0	&0.0104	&0	&0	&0	&0\\
\end{tabular}}
\caption{Percent relative deviations from $Z(n_1,n_2)$ after the conlcusion of the heuristic, for the complete bipartite graphs $K_{n_1,n_2}$.  The {\em spring}, {\em circle} and {\em planar} initial embedding schemes are denoted respectively as {\em s},{\em c} and {\em p}. \label{table:7}}
\end{center}
\end{table}

\begin{table}[h!]
\resizebox{\textwidth}{!}{
\begin{tabular}{l|l|c|c|c|c|c|c|c|c|c}
\multicolumn{11}{c}{  \textbf{Heuristic runtime (sec.) for $K_{n_1,n_2}$}}\\
\hline
$n_1$ & $n_2$ & s,first & s,best & s,bf  & c,first & c,best & c,bf & p,first & p,best& p,bf\\
\hline
20	&20	&2.8400		&24.558	&3.8190		&2.7564		&46.949		&3.1964		&2.2230		&7.7435		&2.1894\\
20	&25	&5.0429		&53.444	&10.408	&5.6112		&68.258		&6.0686		&3.7876		&12.377	&3.8264\\
20	&30	&9.0093		&116.50	&19.750	&9.6452		&127.01		&11.766	&8.0512		&33.853	&7.8455\\
20	&35	&15.346	&192.17	&30.434	&16.704	&216.44		&18.525	&11.674	&38.184	&11.253\\
20	&40	&22.457	&300.00	&54.589	&24.270	&335.33		&32.025	&20.120	&87.243	&20.118\\
25	&25	&12.198	&188.04	&15.685	&12.012	&291.18		&11.593	&5.9796		&7.5129		&5.9845\\
25	&30	&19.335	&338.41	&40.499	&20.860	&439.54 	&22.197	&16.013	&68.515	&16.752\\
25	&35	&32.619	&516.72	&58.314	&36.414	&617.74		&36.929	&18.423	&27.148	&19.012\\
25	&40	&48.460	&878.33	&118.68	&53.492	&892.35		&59.053	&40.087	&182.49	&40.713\\
30	&30	&44.485	&915.05	&73.426	&37.343	&1356.8		&49.762	&34.976	&253.29	&36.584\\
30	&35	&66.802	&1237.9	&142.34	&70.659	&1668.1		&85.998	&53.810	&312.59	&57.174\\
30	&40	&91.535	&1924.7	&272.43	&95.482	&1958.8		&137.42	&93.989	&654.51	&98.410\\
35	&35	&140.10	&2547.6	&199.27	&120.53 &4271.6		&132.97	&63.392	&113.75	&68.421\\
35	&40	&186.39	&3739.3	&386.65	&184.25	&7169.7		&220.25	&157.50	&822.96	&174.53\\
40	&40	&391.16	&6957.5	&683.15	&277.11	&12443	&373.28	&280.24	&2216.0	&295.12\\

\end{tabular}}
\caption{Average heuristic runtime (sec.) per random permutation for the complete bipartite graphs $K_{n_1,n_2}$.  The {\em spring}, {\em circle} and {\em planar} initial embedding schemes are denoted respectively as {\em s},{\em c} and {\em p}.  \label{table:9}}
\end{table}

\begin{table}[h!]
\begin{center}
\resizebox{0.85\textwidth}{!}{
\begin{tabular}{l|l|c|c|c}
\multicolumn{5}{c}{  \textbf{Initial embedding crossings (\%)}}\\
\hline
$n_1$ & $n_2$ & spring & circle & planar\\
\hline
20	&20	&129.40	&191.98	&0.0741\\
20	&25	&115.72	&190.63	&0.0926\\
20	&30	&114.22	&199.44	&0.0688\\
20	&35	&173.59	&191.25	&0.0269\\
20	&40	&112.88	&191.04	&0.0146\\
25	&25	&123.94	&195.86	&0.0000\\
25	&30	&112.99	&183.78	&0.0165\\
25	&35	&111.56	&190.31	&0.0000\\
25	&40	&132.30	&181.84	&0.0548\\
30	&30	&125.14	&190.70	&0.0385\\
30	&35	&111.62	&184.08	&0.0231\\
30	&40	&110.75	&199.05	&0.0727\\
35	&35	&125.54	&192.06	&0.0000\\
35	&40	&115.31	&194.58	&0.0237\\
40	&40	&114.97	&176.00	&0.0506\\

\end{tabular}
\quad
\begin{tabular}{l|l|c|c|c}
\multicolumn{5}{c}{  \textbf{Initial embedding runtime (sec.)}}\\
\hline
$n_1$ & $n_2$ & spring & circle & planar\\
\hline
20	&20	&0.2901	&0.0288	&0.3770\\
20	&25	&0.3244	&0.0415	&0.7642\\
20	&30	&0.3142	&0.0585	&1.4208\\
20	&35	&0.3687	&0.0777	&2.4222\\
20	&40	&0.4910	&0.1025	&3.7615\\
25	&25	&0.3933	&0.0635	&1.5622\\
25	&30	&0.4804	&0.0872	&2.8842\\
25	&35	&0.5414	&0.1215	&4.9249\\
25	&40	&0.7074	&0.1654	&8.0501\\
30	&30	&0.6636	&0.1319	&5.1382\\
30	&35	&0.7924	&0.1857	&9.1530\\
30	&40	&0.9909	&0.2375	&14.562\\
35	&35	&1.1670	&0.2496	&15.514\\
35	&40	&1.4672	&0.3407	&26.831\\
40	&40	&2.0471	&0.4551	&39.911\\

\end{tabular}}
\caption{Percent relative deviations from $Z(n_1,n_2)$ and average runtime (sec.) per random permutation after only the initial embedding for the complete bipartite graphs $K_{n_1,n_2}$.  \label{table:10}}
\end{center}
\end{table}

\section{Conclusion}

We have presented a new heuristic approach to minimising crossings, based on repeatedly solving the star insertion problem. There are a number of parameters and scheme choices that can be utilised and these often result in markedly different performance.

The experiments conducted consistently demonstrate that the {\em planar} initial embedding scheme results in the fastest total execution time for the heuristic, compared to the other initial embedding schemes. This appears to remain true even despite taking considerably longer to complete the initial embedding than the other schemes when the instances are dense. The {\em planar} initial embedding scheme also typically produces a high quality solution, although this depends on the character of the instance considered. In particular, we found that some highly structured sparse instances (for example, the Generalized Petersen graphs $P(j,3)$ considered in Section \ref{sec:knowncr}) responded better to other initial embedding schemes, albeit at a cost to execution time.

Regarding the minimisation schemes, our experiments indicate that the {\em first} minimisation scheme typically provides the best balance between a high quality solution and a fast run-time. For very sparse graphs, the {\em best} minimisation scheme sometimes provides marginally higher quality solutions, but its relatively slow runtime makes it unsuitable for large or very dense graphs. The {\em biggest face} minimisation scheme is the most efficient in the early iterations, but our experiments indicate that the {\em first} minimisation scheme often reaches a locally optimal solution in fewer iterations, and hence it is commonly quicker.

Our experiments indicate that the heuristic performs relatively well on dense graphs, albeit with a slower runtime due to the increased edge density. However, the {\em circle} initial embedding scheme may become impractical for dense graphs since the initial number of crossings is likely to be very large, rendering the early iterations very slow.

Overall, our recommendation for practical use is to rely primarily on the {\em planar, first} setting, and if the highest quality solutions are desired, also consider the {\em circle, first} setting.

For the fixed embedding setting, this work answers the question posed by Chimani and Gutwenger in \cite{chim4} about the performance of a heuristic based upon the star/vertex insertion problem.  It would be interesting to transfer these methods into a variable embedding setting, eliminating the dependence on the initial embedding which has a significant impact on the quality of the solutions.
\section*{Acknowledgements}

We are indebted to the two anonymous referees whose thoughtful and detailed suggestions greatly improved the clarity and quality of this manuscript.



\bibliographystyle{plain}
\bibliography{quickcross_revised_jgaa}{}

\begin{thebibliography}{10}

\bibitem{barreto}
Andr{\'e} da Motta~Salles Barreto and Helio~JC Barbosa.
\newblock Graph {L}ayout {U}sing a {G}enetic {A}lgorithm.
\newblock In {\em Proc. Sixth Brazilian Symposium on Neural Networks}, pages
  179--184. IEEE, 2000.

\bibitem{batini}
Carlo Batini, Maurizio Talamo, and Roberto Tamassia.
\newblock Computer aided layout of entity relationship diagrams.
\newblock {\em Journal of Systems and Software}, 4(2-3):163--173, 1984.

\bibitem{Branke}
J{\"u}rgen Branke, Frank Bucher, and Hartmut Schmeck.
\newblock Using {G}enetic {A}lgorithms for {D}rawing {U}ndirected {G}raphs.
\newblock In {\em Proc. The Third Nordic Workshop on Genetic Algorithms and
  their Applications}. Citeseer, 1996.

\bibitem{buch}
Christoph Buchheim, Markus Chimani, Carsten Gutwenger, Michael J{\"u}nger, and
  Petra Mutzel.
\newblock Crossings and {P}lanarization.
\newblock In R.~Tamassia, editor, {\em Handbook of Graph Drawing and
  Visualization}, pages 43--80. Chapman and Hall/CRC, 2013.

\bibitem{buch2}
Christoph Buchheim, Dietmar Ebner, Michael J{\"u}nger, Gunnar~W Klau, Petra
  Mutzel, and Ren{\'e} Weiskircher.
\newblock Exact {C}rossing {M}inimization.
\newblock In {\em Proc. 13th International Symposium on Graph Drawing}, volume
  3843 of LNCS, pages 37--48. Springer, 2005.

\bibitem{cab}
Sergio Cabello and Bojan Mohar.
\newblock Crossing number and weighted crossing number of near-planar graphs.
\newblock {\em Algorithmica}, 60(3):484--504, 2011.

\bibitem{cab2}
Sergio Cabello and Bojan Mohar.
\newblock Adding {O}ne {E}dge {T}o {P}lanar {G}raphs {M}akes {C}rossing
  {N}umber {A}nd 1-{P}lanarity {H}ard.
\newblock {\em SIAM Journal on Computing}, 42(5):1803--1829, 2013.

\bibitem{chim}
Markus Chimani.
\newblock {\em Computing {C}rossing {N}umbers}.
\newblock PhD thesis, Technical University of Dortmund, 2008.

\bibitem{chim4}
Markus Chimani and Carsten Gutwenger.
\newblock Advances in the {P}lanarization {M}ethod: {E}ffective {M}ultiple
  {E}dge {I}nsertions.
\newblock {\em Journal of Graph Algorithms and Applications}, 16(3):729--757,
  2012.

\bibitem{chim:gut:mutz}
Markus Chimani, Carsten Gutwenger, and Petra Mutzel.
\newblock Experiments on {E}xact {C}rossing {M}inimization {U}sing {C}olumn
  {G}eneration.
\newblock {\em Journal of Experimental Algorithmics (JEA)}, 14(4):303--315,
  2009.

\bibitem{chim3}
Markus Chimani, Carsten Gutwenger, Petra Mutzel, and Christian Wolf.
\newblock Inserting a {V}ertex into a {P}lanar {G}raph.
\newblock In {\em Proc. of the twentieth annual ACM-SIAM symposium on Discrete
  algorithms}, pages 375--383. Society for Industrial and Applied Mathematics,
  2009.

\bibitem{chim5}
Markus Chimani and Petr Hlin{\v{e}}n{\`y}.
\newblock A {T}ighter {I}nsertion-based {A}pproximation of the {C}rossing
  {N}umber.
\newblock In {\em Proc. International Colloquium on Automata, Languages, and
  Programming}, volume 1443 of LNCS, pages 122--134. Springer, 2011.

\bibitem{chim2}
Markus Chimani, Petr Hlin{\v{e}}n{\`y}, and Petra Mutzel.
\newblock Vertex insertion approximates the crossing number of apex graphs.
\newblock {\em European Journal of Combinatorics}, 33(3):326--335, 2012.

\bibitem{chuzhoy}
Julia Chuzhoy, Yury Makarychev, and Anastasios Sidiropoulos.
\newblock On graph crossing number and edge planarization.
\newblock In {\em Proc. of the twenty-second annual ACM-SIAM symposium on
  Discrete algorithms}, pages 1050--1069. SIAM, 2011.

\bibitem{Fray}
Hubert De~Fraysseix, J{\'a}nos Pach, and Richard Pollack.
\newblock How to draw a planar graph on a grid.
\newblock {\em Combinatorica}, 10(1):41--51, 1990.

\bibitem{graphdrawing}
Giuseppe Di~Battista, Ashim Garg, Giuseppe Liotta, Roberto Tamassia, Emanuele
  Tassinari, and Francesco Vargiu.
\newblock An experimental comparison of four graph drawing algorithms.
\newblock {\em Computational Geometry}, 7(5-6):303--325, 1997.

\bibitem{Djidjev}
Hristo~N Djidjev.
\newblock A {L}inear-{T}ime {A}lgorithm {F}or {F}inding a {M}aximal {P}lanar
  {S}ubgraph.
\newblock {\em SIAM journal on Discrete Mathematics}, 20(2):444--462, 2006.

\bibitem{Eades}
Peter Eades.
\newblock A {H}euristic for {G}raph {D}rawing.
\newblock {\em Congressus Numerantium}, 42:149--160, 1984.

\bibitem{Eloranta}
Timo Eloranta and Erkki M{\"a}kinen.
\newblock Tim{GA}: {A} {G}enetic {A}lgorithm for {D}rawing {U}ndirected
  {G}raphs.
\newblock {\em Divulgaciones Matematicas}, 9(2):155--171, 2001.

\bibitem{g13}
Geoffrey Exoo, Frank Harary, and Jerald Kabell.
\newblock The crossing numbers of some generalized petersen graphs.
\newblock {\em Mathematica Scandinavica}, 43:184--188, 1981.

\bibitem{GareyJohnson}
Michael~R Garey and David~S Johnson.
\newblock Crossing {N}umber is {NP}-{C}omplete.
\newblock {\em SIAM Journal on Algebraic Discrete Methods}, 4(3):312--316,
  1983.

\bibitem{gut:mutz}
Carsten Gutwenger and Petra Mutzel.
\newblock An {E}xperimental {S}tudy of {C}rossing {M}inimization {H}euristics.
\newblock In {\em Proc. 11th International Symposium on Graph Drawing}, volume
  2912 of LNCS, pages 13--24. Springer, 2003.

\bibitem{gut2}
Carsten Gutwenger, Petra Mutzel, and Ren{\'e} Weiskircher.
\newblock Inserting an {E}dge into a {P}lanar {G}raph.
\newblock {\em Algorithmica}, 41(4):289--308, 2005.

\bibitem{gut}
Cartsten Gutwenger.
\newblock {\em Application of {SPQR}-{T}rees in the {P}lanarization {A}pproach
  for {D}rawing {G}raphs}.
\newblock PhD thesis, Technical University of Dortmund, 2010.

\bibitem{hill}
Richard~K Guy.
\newblock A combinatorial problem.
\newblock {\em Nabla (Bulletin of the Malayan Mathematical Society)}, 7:68--72,
  1960.

\bibitem{FM3}
Stefan Hachul and Michael J{\"u}nger.
\newblock Drawing {L}arge {G}raphs with a {P}otential-{F}ield-{B}ased
  {M}ultilevel {A}lgorithm.
\newblock In {\em Proc. 12th International Symposium on Graph Drawing}, volume
  3383 of LNCS, pages 285--295. Springer, 2004.

\bibitem{hlin}
Petr Hlin{\v{e}}n{\`y} and Gelasio Salazar.
\newblock On the {C}rossing {N}umber of {A}lmost {P}lanar {G}raphs.
\newblock In {\em Proc. 14th International Symposium on Graph Drawing}, volume
  4372 of LNCS, pages 162--173. Springer, 2006.

\bibitem{Kawai}
Tomihisa Kamada and Satoru Kawai.
\newblock An algorithm for drawing general undirected graphs.
\newblock {\em Information Processing Letters}, 31(1):7--15, 1989.

\bibitem{liu}
P.C Liu and R.V Geldmacher.
\newblock On the deletion of nonplanar edges of a graph.
\newblock {\em Congressus Numerantium}, 24(1):729--738, 1979.

\bibitem{pan}
Shengjun Pan and R~Bruce Richter.
\newblock The crossing number of ${K}_{11}$ is 100.
\newblock {\em Journal of Graph Theory}, 56(2):128--134, 2007.

\bibitem{Radermacher}
Marcel Radermacher and Ignaz Rutter.
\newblock Inserting an {E}dge into a {G}eometric {E}mbedding.
\newblock In {\em Proc. 26th International Symposium on Graph Drawing}, volume
  11282 of LNCS, pages 402--415. Springer, 2018.

\bibitem{Schny}
Walter Schnyder.
\newblock Embedding planar graphs on the grid.
\newblock In {\em Proc. of the first annual ACM-SIAM symposium on Discrete
  algorithms}, pages 138--148. Society for Industrial and Applied Mathematics,
  1990.

\bibitem{tur}
Paul Tur{\'a}n.
\newblock A note of welcome.
\newblock {\em Journal of Graph Theory}, 1(1):7--9, 1977.

\bibitem{vrt}
Imrich Vrt’o.
\newblock Crossing {N}umbers of {G}raphs: A {B}ibliography.
\newblock {\em Available electronically at
  ftp://ifi.savba.sk/pub/imrich/crobib.pdf}, 2008.

\bibitem{Zaran}
Casimir Zarankiewicz.
\newblock On a problem of {P}. {T}ur{\'a}n concerning graphs.
\newblock {\em Fundamenta Mathematicae}, 1(41):137--145, 1955.

\bibitem{Zieg}
Thomas Ziegler.
\newblock {\em Crossing {M}inimization in {A}utomatic {G}raph {D}rawing}.
\newblock PhD thesis, Saarland University, Germany, 2001.

\end{thebibliography}

\end{document}